\documentclass[12pt]{amsart}
\usepackage{amssymb}
\usepackage{amsxtra}
\usepackage{graphics}
\usepackage{latexsym}
\usepackage{enumerate}
\usepackage{xcolor}
\usepackage{amsmath}
\usepackage{amssymb,amsthm,amsfonts}
\usepackage{mathtools}
\usepackage{amscd}
\usepackage[arrow, matrix, curve]{xy}
\usepackage[mathscr]{euscript}
\usepackage{syntonly}

\ExecuteOptions{dvips} \marginparwidth 0pt \oddsidemargin 1.5
truecm \evensidemargin 1.5 truecm \marginparsep 1pt \topmargin 1pt
\textheight 22.5 truecm \textwidth 14.5 truecm

\title[Linear Hodge-Newton decomposition and its applications]{Linear Hodge-Newton decomposition and its applications}

\author[Ziyan Song]{Ziyan Song}
\email{ziyans@mail.ustc.edu.cn}
 
\address{School of Mathematical Sciences,
University of Science and Technology of China, Hefei, 230026, China}

\begin{document}
\theoremstyle{plain}
\newtheorem{thm}{Theorem}[section]
\newtheorem{theorem}[thm]{Theorem}
\newtheorem*{theorem*}{Theorem}
\newtheorem*{definition*}{Definition}
\newtheorem{lemma}[thm]{Lemma}
\newtheorem{sublemma}[thm]{Sublemma}
\newtheorem{corollary}[thm]{Corollary}
\newtheorem*{corollary*}{Corollary}
\newtheorem{proposition}[thm]{Proposition}
\newtheorem{addendum}[thm]{Addendum}
\newtheorem{variant}[thm]{Variant}
\theoremstyle{definition}
\newtheorem{construction}[thm]{Construction}
\newtheorem{notations}[thm]{Notations}
\newtheorem{question}[thm]{Question}
\newtheorem{problem}[thm]{Problem}
\newtheorem{remark}[thm]{Remark}
\newtheorem{remarks}[thm]{Remarks}
\newtheorem{definition}[thm]{Definition}
\newtheorem{claim}[thm]{Claim}
\newtheorem{assumption}[thm]{Assumption}
\newtheorem{assumptions}[thm]{Assumptions}
\newtheorem{properties}[thm]{Properties}
\newtheorem{example}[thm]{Example}
\newtheorem{conjecture}[thm]{Conjecture}
\numberwithin{equation}{thm}

\newcommand{\sA}{{\mathcal A}}
\newcommand{\sB}{{\mathcal B}}
\newcommand{\sC}{{\mathcal C}}
\newcommand{\sD}{{\mathcal D}}
\newcommand{\sE}{{\mathcal E}}
\newcommand{\sF}{{\mathcal F}}
\newcommand{\sG}{{\mathcal G}}
\newcommand{\sH}{{\mathcal H}}
\newcommand{\sI}{{\mathcal I}}
\newcommand{\sJ}{{\mathcal J}}
\newcommand{\sK}{{\mathcal K}}
\newcommand{\sL}{{\mathcal L}}
\newcommand{\sM}{{\mathcal M}}
\newcommand{\sN}{{\mathcal N}}
\newcommand{\sO}{{\mathcal O}}
\newcommand{\sP}{{\mathcal P}}
\newcommand{\sQ}{{\mathcal Q}}
\newcommand{\sR}{{\mathcal R}}
\newcommand{\sS}{{\mathcal S}}
\newcommand{\sT}{{\mathcal T}}
\newcommand{\sU}{{\mathcal U}}
\newcommand{\sV}{{\mathcal V}}
\newcommand{\sW}{{\mathcal W}}
\newcommand{\sX}{{\mathcal X}}
\newcommand{\sY}{{\mathcal Y}}
\newcommand{\sZ}{{\mathcal Z}}
\newcommand{\A}{{\mathbb A}}
\newcommand{\B}{{\mathbb B}}
\newcommand{\C}{{\mathbb C}}
\newcommand{\D}{{\mathbb D}}
\newcommand{\E}{{\mathbb E}}
\newcommand{\F}{{\mathbb F}}
\newcommand{\G}{{\mathbb G}}
\newcommand{\HH}{{\mathbb H}}
\newcommand{\I}{{\mathbb I}}
\newcommand{\J}{{\mathbb J}}
\renewcommand{\L}{{\mathbb L}}
\newcommand{\M}{{\mathbb M}}
\newcommand{\N}{{\mathbb N}}
\renewcommand{\P}{{\mathbb P}}
\newcommand{\Q}{{\mathbb Q}}
\newcommand{\R}{{\mathbb R}}
\newcommand{\SSS}{{\mathbb S}}
\newcommand{\T}{{\mathbb T}}
\newcommand{\U}{{\mathbb U}}
\newcommand{\V}{{\mathbb V}}
\newcommand{\W}{{\mathbb W}}
\newcommand{\X}{{\mathbb X}}
\newcommand{\Y}{{\mathbb Y}}
\newcommand{\Z}{{\mathbb Z}}
\newcommand{\id}{{\rm id}}
\newcommand{\rank}{{\rm rank}}
\newcommand{\END}{{\mathbb E}{\rm nd}}
\newcommand{\End}{{\rm End}}
\newcommand{\Hom}{{\rm Hom}}
\newcommand{\Hg}{{\rm Hg}}
\newcommand{\tr}{{\rm tr}}
\newcommand{\Sl}{{\rm Sl}}
\newcommand{\Gl}{{\rm Gl}}
\newcommand{\Cor}{{\rm Cor}}
\newcommand{\Aut}{\mathrm{Aut}}
\newcommand{\Sym}{\mathrm{Sym}}
\newcommand{\ModuliCY}{\mathfrak{M}_{CY}}
\newcommand{\HyperCY}{\mathfrak{H}_{CY}}
\newcommand{\ModuliAR}{\mathfrak{M}_{AR}}
\newcommand{\Modulione}{\mathfrak{M}_{1,n+3}}
\newcommand{\Modulin}{\mathfrak{M}_{n,n+3}}
\newcommand{\Gal}{\mathrm{Gal}}
\newcommand{\Spec}{\mathrm{Spec}}
\newcommand{\res}{\mathrm{res}}
\newcommand{\coker}{\mathrm{coker}}
\newcommand{\Jac}{\mathrm{Jac}}
\newcommand{\HIG}{\mathrm{HIG}}
\newcommand{\MIC}{\mathrm{MIC}}

\maketitle

\begin{abstract}
Firstly, we provide a different proof of an important lemma in Buzzard and Calegari's work on slopes of overconvergent 2-adic modular forms via nonarchimedean linear Hodge-Newton decomposition. The lemma shows that two equivalent matrices with coefficients in the ring of integers in an archimedean field have the same Newton polygon under suitable conditions. Secondly, we give an archimedean analogue of the above lemma.
\end{abstract}
\section{Introduction}
In matrix analysis, there is an important subject of metric properties of matrices over a complete field with a given norm. It plays a key role in p-adic differential equations, see for \cite{Ked}. In this paper, our purpose is to investigate the relationship between the norms of eigenvalues of a matrix and singular values of a matrix. We first present the main points in the archimedean setting and then for nonarchimedean field.

Let $A$ be an $n\times n$ matrix over $\mathbb{C}$ and $\mathbb{C}^n$ equipped with $L^2$ norm. The operator norm of $A$ is $|A|=\sup \limits_{v \in \mathbb{C}^n-\{0\}} \frac{|Av|}{|v|}$.  Throughout this paper, denote the eigenvalues of $A$ (resp. singular values) by $\lambda_{1},\dots,\lambda_{n}$ (resp. $\sigma_{1},\dots,\sigma_{n}$) and arrange so that $|\lambda_{1}| \geq |\lambda_{2}| \dots \geq |\lambda_{n}|$ (resp. $\sigma_{1} \geq \sigma_{2}\geq \dots \geq \sigma_{n}$).
There is an important decomposition of matrix called singular value decomposition.

\begin{theorem} (Singular value decomposition)
There exist unitary $n \times n$ matrices $U$ and $V$ such that $UAV=Diag(\sigma_1,\dots,\sigma_n)$.
\end{theorem}

As a direct consequence of singular value decomposition, the operator norm of $A$ equals the largest singular value $\sigma_{1}$. Using this consequence, Weyl gave the relationship between the norms of eigenvalues of a matrix and singular values of a matrix. In other words, there is an upper bound of norms of product of all eigenvalues.

\begin{theorem} \cite{Wey}
\begin{displaymath}
\sigma_{1}\dots \sigma_{i} \geq |\lambda_{1}\dots \lambda_{i}| \qquad (i=1,\dots,n),
\end{displaymath}
\vspace{0.2cm}
with the equality for $i=n$
\end{theorem}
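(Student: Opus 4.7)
The plan is to reduce everything to the case $i=1$ by passing to the $i$-th exterior power. The $i=1$ inequality $|\lambda_1|\le \sigma_1$ is essentially immediate from the singular value decomposition quoted just above: the consequence that $|A|=\sigma_1$ combined with the elementary observation that for any eigenpair $Av=\lambda v$ with $|v|=1$ one has $|\lambda|=|Av|\le|A|$ shows that the spectral radius of $A$ is bounded by $\sigma_1$. So I would first isolate this statement as a preliminary lemma, since it is the only analytic input really needed.

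Next I would consider the induced operator $\wedge^i A$ on the exterior power $\wedge^i\mathbb{C}^n$, equipped with the Hermitian structure for which the wedges of orthonormal basis vectors are orthonormal. The key functoriality statements are that (a) if $A=U^{-1}\mathrm{Diag}(\sigma_1,\dots,\sigma_n)V^{-1}$ is a singular value decomposition, then applying $\wedge^i$ yields a singular value decomposition of $\wedge^i A$ whose singular values are the products $\sigma_{j_1}\cdots\sigma_{j_i}$ over all $i$-subsets, so in particular the largest singular value of $\wedge^i A$ is $\sigma_1\cdots\sigma_i$; and (b) after passing to an upper triangularization (Schur form) of $A$, the eigenvalues of $\wedge^i A$ are the analogous products of eigenvalues $\lambda_{j_1}\cdots\lambda_{j_i}$, so the spectral radius of $\wedge^i A$ is $|\lambda_1\cdots\lambda_i|$. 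Plugging these two identifications into the preliminary lemma applied to $\wedge^i A$ immediately yields
\[
|\lambda_1\cdots\lambda_i|\le \sigma_1\cdots\sigma_i.
\]

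Finally, the case $i=n$ is special because $\wedge^n A$ is just multiplication by $\det A$ on a one-dimensional space. Since $\det(UAV)=\det U\cdot\det A\cdot\det V$ with $|\det U|=|\det V|=1$ for unitary $U,V$, the singular value decomposition forces $\sigma_1\cdots\sigma_n=|\det A|$; on the other hand $\det A=\lambda_1\cdots\lambda_n$, so both sides coincide. The main obstacle I anticipate is the bookkeeping in step (a) above: one must check that the wedges of the columns of the unitary matrices entering the SVD remain orthonormal and that the resulting decomposition of $\wedge^i A$ genuinely has diagonal entries equal to all products $\sigma_{j_1}\cdots\sigma_{j_i}$. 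This is standard multilinear algebra, but it is the only place where a nontrivial verification is required; the rest of the argument is a formal application of the $i=1$ case.
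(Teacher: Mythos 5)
Your proof is correct and follows essentially the same route the paper relies on: the paper cites Weyl for this statement rather than reproving it, but its own machinery (the remark that $|A|=\sigma_1$ as a consequence of the singular value decomposition, together with Proposition 2.5 identifying the singular values and eigenvalues of $\wedge^{i}A$ as $i$-fold products) is precisely the exterior-power reduction to the $i=1$ case that you carry out. Your handling of the equality at $i=n$ via $\sigma_1\cdots\sigma_n=|\det A|=|\lambda_1\cdots\lambda_n|$ is the standard completion and is sound.
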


The equality in Weyl's theorem has a structual meaning. 

\begin{theorem} \cite{Ked} (Archimedean linear Hodge-Newton decomposition) 

Suppose that for some $i \in \{1,\dots,n-1\}$ we have 

\[ \sigma_{i}>\sigma_{i+1}, \quad |\lambda_{i}|>|\lambda_{i+1}|, \]
\[ \sigma_{1}\dots \sigma_{i} = |\lambda_{1}\dots \lambda_{i}|. \]

Then there exists a unitary matrix $U$ such that $U^{-1}AU$ is block diagonal. The first block accounts for the first $i$ singular values and eigenvalues and the second accounts for the others.

\end{theorem}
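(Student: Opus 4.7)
The plan is to produce the unitary $U$ from an $i$-dimensional subspace $V_1\subset\C^n$ that is invariant under both $A$ and $A^*$; then $V_1^\perp$ is automatically $A$-invariant, and an orthonormal basis adapted to $\C^n=V_1\oplus V_1^\perp$ yields the required $U$. The obvious candidate for $V_1$ is the $i$-dimensional $A$-invariant subspace carrying the eigenvalues $\lambda_1,\dots,\lambda_i$, produced by Schur's theorem; the nontrivial point is to show that this $V_1$ is also $A^*$-invariant, and the argument goes through the $i$-th exterior power.

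First I would pass to $\wedge^i A$ acting on $\wedge^i\C^n$. Using the SVD and the Schur decomposition of $A$, one checks that the singular values (resp.\ eigenvalues) of $\wedge^i A$ are the products $\sigma_{j_1}\cdots\sigma_{j_i}$ (resp.\ $\lambda_{j_1}\cdots\lambda_{j_i}$) indexed by $j_1<\cdots<j_i$. The gap hypotheses single out $\sigma_1\cdots\sigma_i$ as the strictly largest singular value and $\mu:=\lambda_1\cdots\lambda_i$ as the eigenvalue of strictly largest modulus (which is nonzero since $|\lambda_i|>|\lambda_{i+1}|\geq 0$); picking any basis $\{v_1,\dots,v_i\}$ of $V_1$ gives a corresponding eigenvector $\omega=v_1\wedge\cdots\wedge v_i$. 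The hypothesis $\sigma_1\cdots\sigma_i=|\lambda_1\cdots\lambda_i|$ thus becomes $|\mu|=|\wedge^i A|$.

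Second, I would apply the elementary principle: if $Bv=\mu v$ with $\mu\neq 0$ and $|\mu|=|B|$, then $v$ achieves the supremum defining the operator norm, hence is a top right singular vector, so $B^*Bv=|\mu|^2v$; substituting $Bv=\mu v$ gives $B^*v=\bar\mu v$. Applied to $B=\wedge^i A$ and $\omega$, together with the identity $(\wedge^i A)^*=\wedge^i(A^*)$, this yields
\[ A^*v_1\wedge\cdots\wedge A^*v_i=\bar\mu\cdot v_1\wedge\cdots\wedge v_i. \]
Since a nonzero decomposable $i$-vector determines its supporting $i$-plane, this forces $\mathrm{span}(A^*v_1,\dots,A^*v_i)=V_1$, whence $A^*V_1\subseteq V_1$. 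Consequently $V_1^\perp$ is $A$-invariant, and a unitary $U$ adapted to the decomposition $V_1\oplus V_1^\perp$ conjugates $A$ into block diagonal form $\mathrm{diag}(A_1,A_2)$.

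To confirm that $A_1$ accounts for the top $i$ singular values, I would note that $A_1$ has eigenvalues $\lambda_1,\dots,\lambda_i$, so Weyl's equality case applied to the $i\times i$ matrix $A_1$ gives $\sigma(A_1)_1\cdots\sigma(A_1)_i=|\det A_1|=|\lambda_1\cdots\lambda_i|=\sigma_1\cdots\sigma_i$; combined with the strict gap $\sigma_i>\sigma_{i+1}$, this forces the singular values of $A_1$ to be exactly $\sigma_1,\dots,\sigma_i$ and those of $A_2$ to be the remaining $\sigma_{i+1},\dots,\sigma_n$. The main obstacle I anticipate is the exterior-algebra bookkeeping: ensuring that the induced Hermitian inner product on $\wedge^i\C^n$ really makes $(\wedge^i A)^*=\wedge^i(A^*)$, and that the scalar identity on the decomposable vector $\omega$ cleanly descends to subspace invariance via the correspondence between nonzero decomposable $i$-vectors and $i$-planes.
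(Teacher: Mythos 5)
Your argument is correct, but note that the paper itself never proves Theorem 1.3: it is quoted from \cite{Ked}, and the only decomposition actually proved in the text is the nonarchimedean one (Theorem 3.8), whose method is genuinely different from yours. There the splitting of $F^n$ comes from the generalized eigenspace projectors $P(A)B(A)$ and $Q(A)C(A)$ attached to the factorization of the characteristic polynomial; an integral basis adapted to that splitting gives a block upper triangular form, and when $\sigma_i>\sigma_{i+1}$ the off-diagonal block is killed by an infinite product of conjugations by unimodular upper triangular matrices, converging because $|C_k|\leq(\sigma_{i+1}/\sigma_i)^k\,|C|$. That iteration lands in $GL_n(\textbf{o}_F)$, not in the unitary group, so it does not transplant directly to the archimedean statement, and your route through orthogonality is the natural substitute: you show the $i$-dimensional $A$-invariant subspace $V_1$ is also $A^*$-invariant by passing to $\wedge^i A$, noting that $\omega=v_1\wedge\cdots\wedge v_i$ is an eigenvector with eigenvalue $\mu=\lambda_1\cdots\lambda_i\neq 0$ satisfying $|\mu|=|\wedge^i A|$ (this is exactly the hypothesis $\sigma_1\cdots\sigma_i=|\lambda_1\cdots\lambda_i|$), and invoking the correct elementary fact that a norm-attaining eigenvector of $B$ with nonzero eigenvalue $\mu$ satisfies $B^*Bv=|B|^2v$ and hence $B^*v=\bar\mu v$; the decomposable-vector argument then yields $A^*V_1\subseteq V_1$, so $V_1^{\perp}$ is $A$-invariant and a unitary $U$ exists. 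Your final accounting is also sound: $\tau_1\cdots\tau_i=|\det A_1|=|\lambda_1\cdots\lambda_i|=\sigma_1\cdots\sigma_i$, and since any other choice of $i$ of the $\sigma$'s has product at most $\sigma_1\cdots\sigma_{i-1}\sigma_{i+1}<\sigma_1\cdots\sigma_i$, the gap $\sigma_i>\sigma_{i+1}$ forces the blocks to carry exactly $\{\sigma_1,\dots,\sigma_i\}$ and $\{\sigma_{i+1},\dots,\sigma_n\}$. The two facts you defer — that $(\wedge^i A)^*=\wedge^i(A^*)$ for the induced Hermitian product and that the singular values and eigenvalues of $\wedge^i A$ are the $i$-fold products — are standard, and the latter is precisely Proposition 2.5 of the paper, so your proof fits the paper's toolkit; what it buys over the paper's nonarchimedean scheme is that it produces the unitary conjugation in one step, with no limiting process, by exploiting exactly the orthogonality the paper remarks is missing in the nonarchimedean setting.
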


All the above theorems have the nonarchimedean analogue. Here we will be concerned with the nonarchimedean linear Hodge-Newton decomposition. 

Let $F$ be a complete nonachimedean field and $A$ be an $n\times n$ matrix over $F$. Denote the ring of integers in $F$ by $\textbf{o}_F$.  $m_{F}$ is the maximal ideal of $\textbf{o}_F$.

\begin{theorem} \cite{Ked} (Nonachimedean linear Hodge-Newton decomposition)

Suppose that for some $i \in \{1,\dots,n-1\}$ we have 

\[ |\lambda_{i}|>|\lambda_{i+1}|, \quad \sigma_{1}\dots \sigma_{i} = |\lambda_{1}\dots \lambda_{i}|. \]

Then there exists $U \in GL_{n}(\textbf{o}_F)$ such that $U^{-1}AU$ is block upper triangular, with the top left block accounting for the first $i$ singular values and eigenvalues and the bottom right block accounting for the others.

Moreover, if $\sigma_{i}>\sigma_{i+1}$, then $U^{-1}AU$ is block diagonal.
\end{theorem}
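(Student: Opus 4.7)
The plan is to realize the block decomposition from an $A$-invariant saturated $\mathbf{o}_F$-sublattice of $\mathbf{o}_F^n$ attached to the top eigenvalues of $A$, and then, under the stronger hypothesis, to kill the off-diagonal block via a Sylvester-type equation.

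After rescaling by a scalar in $F^\times$ (which affects all eigenvalues and singular values by the same factor and so preserves both hypotheses), I may assume $A\in M_n(\mathbf{o}_F)$. The Newton-break assumption $|\lambda_i|>|\lambda_{i+1}|$ is exactly a corner of the Newton polygon of $P(T)=\det(TI_n-A)\in\mathbf{o}_F[T]$ at horizontal coordinate $i$, and the slope-factorization theorem (Hensel's lemma for Newton polygons over complete nonarchimedean fields) produces a factorization
\[
P(T)=P_1(T)P_2(T),\qquad P_1,P_2\in\mathbf{o}_F[T],
\]
with $P_1$ monic of degree $i$ and roots of norms $|\lambda_1|,\ldots,|\lambda_i|$, and $P_2$ monic of degree $n-i$ and roots of norms $|\lambda_{i+1}|,\ldots,|\lambda_n|$. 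I then set $W:=\ker P_1(A)\subset F^n$, an $A$-invariant subspace of dimension $i$, and $M:=W\cap\mathbf{o}_F^n$, which is saturated in $\mathbf{o}_F^n$, hence a direct summand of rank $i$.

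Choosing an $\mathbf{o}_F$-basis of $M$ and extending to an $\mathbf{o}_F$-basis of $\mathbf{o}_F^n$ yields $U_0\in GL_n(\mathbf{o}_F)$ with
\[
U_0^{-1}AU_0=\begin{pmatrix}A_1&B\\0&A_2\end{pmatrix},\qquad A_1\in M_i(\mathbf{o}_F),\ A_2\in M_{n-i}(\mathbf{o}_F).
\]
The characteristic polynomial of $A_j$ is $P_j$, so the eigenvalues of the two blocks partition those of $A$ as advertised; in particular $|\det A_1|=|\lambda_1\cdots\lambda_i|$, which by the Hodge-Newton hypothesis equals $\sigma_1\cdots\sigma_i$. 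To match the singular values, the key computation is that for each $j$ the operator $\Lambda^{i+j}A$ preserves the $\mathbf{o}_F$-direct summand of $\Lambda^{i+j}\mathbf{o}_F^n$ spanned by the basis vectors $e_1\wedge\cdots\wedge e_i\wedge e_{i+k_1}\wedge\cdots\wedge e_{i+k_j}$, and acts there as $(\det A_1)\,\Lambda^jA_2$. Combining $\|\Lambda^{i+j}A\|=\sigma_1\cdots\sigma_{i+j}$ with $|\det A_1|=\sigma_1\cdots\sigma_i$ yields $\|\Lambda^jA_2\|\leq\sigma_{i+1}\cdots\sigma_{i+j}$; a symmetric argument using the isometric embedding $\Lambda^jM\hookrightarrow\Lambda^j\mathbf{o}_F^n$ handles $A_1$, and the two-sided bounds together with the determinantal equalities pin down the singular values of $A_1$ and $A_2$ as the first $i$ and last $n-i$ singular values of $A$ respectively.

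Finally, assume the sharper hypothesis $\sigma_i>\sigma_{i+1}$. I look for $V=\begin{pmatrix}I_i&X\\0&I_{n-i}\end{pmatrix}\in GL_n(\mathbf{o}_F)$ making $V^{-1}U_0^{-1}AU_0V$ block diagonal; this amounts to the Sylvester equation $A_1X-XA_2=-B$. The separation $\|A_1^{-1}\|\cdot\|A_2\|\leq\sigma_{i+1}/\sigma_i<1$ makes the Neumann series
\[
X=\sum_{k\geq 0}A_1^{-k-1}BA_2^k
\]
converge in the nonarchimedean operator norm to a solution satisfying $\|X\|\leq\|B\|/\sigma_i$. The main obstacle will be establishing the integrality bound $\|B\|\leq\sigma_i$, which ensures $X\in M_{i\times(n-i)}(\mathbf{o}_F)$ and hence $V\in GL_n(\mathbf{o}_F)$: this bound does not follow from the block upper triangular structure alone, and must be extracted from the same wedge-product analysis by jointly using the Newton break, the Hodge break, and the Hodge-Newton equality at $i$, so that it is precisely the added hypothesis $\sigma_i>\sigma_{i+1}$ that closes the argument for the block diagonal refinement.
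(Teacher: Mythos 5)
Your construction of $U_0$ (slope factorization of the characteristic polynomial, the saturated lattice $M=W\cap\mathbf{o}_F^n$, extension of an $\mathbf{o}_F$-basis) is the same as the paper's, and your Sylvester-equation/Neumann-series removal of the off-diagonal block is just a repackaging of the paper's iterated conjugation. The gap is in the singular-value accounting, and it sits exactly where the Hodge--Newton equality has to do its work. The step ``the two-sided bounds together with the determinantal equalities pin down the singular values of $A_1$ and $A_2$'' is not valid: what your wedge-power estimates give is $\prod_{l\le j}\sigma_l(A_1)\le\prod_{l\le j}\sigma_l$ for $j\le i$ (equality at $j=i$) and $\prod_{l\le j}\sigma_l(A_2)\le\prod_{l=i+1}^{i+j}\sigma_l$ for $j\le n-i$ (equality at $j=n-i$), i.e.\ the Hodge polygons of the blocks lie on or above the corresponding pieces of the Hodge polygon of $A$ with matching endpoints. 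That does not determine them: for $i=2$ and $\sigma_1>\sigma_2$, the assignment $\sigma_1(A_1)=\sigma_2(A_1)=\sqrt{\sigma_1\sigma_2}$ is consistent with every one of your constraints. The missing converse inequality cannot be had for free, because a $j\times j$ minor of $U_0^{-1}AU_0$ whose row set meets the top block in more indices than its column set does involves the off-diagonal block $B$; so the identity $\max|j\times j\ \mathrm{minors}|=\sigma_1\cdots\sigma_j$ does not split between $A_1$ and $A_2$ without control on $B$.

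That control is precisely the bound you defer as ``the main obstacle'' and never prove, and your suggestion that the extra hypothesis $\sigma_i>\sigma_{i+1}$ is what closes it is misplaced: it is needed already for the first claim and holds without any gap in the $\sigma$'s. The paper gets it by Cramer's rule: each entry of $A_1^{-1}B$ is an $i\times i$ minor of $U_0^{-1}AU_0$ divided by $\det A_1$, and every $i\times i$ minor has norm at most $\sigma_1\cdots\sigma_i=|\lambda_1\cdots\lambda_i|=|\det A_1|$, so $A_1^{-1}B$ has entries in $\mathbf{o}_F$. This single observation is what your argument is missing; it yields the factorization $U_0^{-1}AU_0=\begin{pmatrix}A_1&0\\0&A_2\end{pmatrix}\begin{pmatrix}I_i&A_1^{-1}B\\0&I_{n-i}\end{pmatrix}$ with the unipotent factor in $GL_n(\mathbf{o}_F)$, hence the singular values of $A_1$ and $A_2$ together are $\sigma_1,\dots,\sigma_n$ (which, combined with your inequalities, gives the asserted split into the first $i$ and the last $n-i$), and it is also exactly the integrality needed to put your solution $X$ of the Sylvester equation in $M_{i\times(n-i)}(\mathbf{o}_F)$ when $\sigma_i>\sigma_{i+1}$. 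With that lemma inserted, your write-up becomes a correct variant of the paper's proof; without it, both the first claim's accounting and the second claim's $V\in GL_n(\mathbf{o}_F)$ are unproven.
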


Theorem 1.4 will be proved later followed by \cite{Ked}. It is more complicated than the archimedean case since we don't have the notion of orthogonality.

In the following, we give some applications in Newton and Hodge polygons of the matrix. Newton polygon and Hodge polygon are motivated by Katz conjecture. It asserts that the Newton polygon lies above the Hodge polygon and leads thus to a relation between the characteristic polynomial of Frobenius and the Hodge numbers of the original variety. See \cite{Ogu} for more details. The nonarchimedean Weyl inequality can be viewed as a linear analogue of Katz conjecture.

As an application of nonarchimedean linear Hodge-Newton decomposition, we have

\begin{theorem}
Under the hypotheses and notations of Theorem 1.4, if $U,V \in GL_{n}(O_F)$ are congruent to the identity matrix modulo $m_F$, then the Newton polygons of $A$ and $UAV$ are coincide.
\end{theorem}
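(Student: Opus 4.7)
The plan is to control the coefficients of the characteristic polynomial of $UAV$ in terms of those of $A$ via exterior powers. Denote the $k$-th such coefficient by $c_k(\cdot)=\tr\wedge^k(\cdot)$ (up to sign). Using multiplicativity of exterior powers and cyclicity of the trace,
\[
c_k(UAV)=\tr\bigl(\wedge^k U\cdot\wedge^k A\cdot\wedge^k V\bigr)=c_k(A)+\tr\bigl(M_k\,\wedge^k A\bigr),
\]
where $M_k:=\wedge^k V\,\wedge^k U - I$ has entries in $m_F$, since $U,V\equiv I\pmod{m_F}$ forces $\wedge^k U,\wedge^k V\equiv I\pmod{m_F}$. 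The entries of $\wedge^k A$ in the standard basis are the $k\times k$ minors of $A$, and by the theory of elementary divisors (Smith normal form) each such minor lies in the Fitting ideal $(\sigma_1\cdots\sigma_k)$; this gives the key estimate
\[
v\bigl(c_k(UAV)-c_k(A)\bigr)\;\geq\;v(\sigma_1\cdots\sigma_k)+1.
\]

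Under the hypothesis of Theorem 1.4, the strict inequality $|\lambda_i|>|\lambda_{i+1}|$ makes $\lambda_1\cdots\lambda_i$ the uniquely dominant summand of $c_i(A)=\sum_{|S|=i}\prod_{j\in S}\lambda_j$, giving $v(c_i(A))=v(\lambda_1\cdots\lambda_i)=v(\sigma_1\cdots\sigma_i)$ by the Hodge--Newton saturation. The error estimate above is strictly larger, so $v(c_i(UAV))=v(c_i(A))$, and both Newton polygons pass through the vertex $\bigl(i,v(\sigma_1\cdots\sigma_i)\bigr)$ at which the Newton polygon of $A$ touches the Hodge polygon.

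To propagate this coincidence to the full Newton polygons, I would invoke Theorem 1.4 to find $W\in GL_n(O_F)$ putting $A$ in block upper triangular form with diagonal blocks $A_1$ (size $i$) and $A_2$ (size $n-i$); after absorbing $W$ into $U$ and $V$, which remain congruent to $I$ modulo $m_F$, the Newton polygon of $A$ becomes the concatenation of those of $A_1$ and $A_2$. A further $GL_n(O_F)$-conjugation of $UAV$ by a matrix close to the identity then block-triangularizes $UAV$ via solving a nonarchimedean Sylvester-type equation whose solvability relies on the spectral gap $|\lambda_i|>|\lambda_{i+1}|$, exactly as in the proof of Theorem 1.4 itself. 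An induction on $n$ applied to the two smaller diagonal blocks is then intended to complete the argument.

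The main obstacle is that the key estimate above pins down $v(c_k(UAV))$ unambiguously only at indices $k$ where the Newton polygon of $A$ already touches the Hodge polygon; at genuine Newton vertices lying strictly above the Hodge polygon, the estimate is too weak and the coefficient could a priori jump. Propagating the argument to such vertices requires either a sharper estimate on the Schur-complement corrections absorbed during block-triangularization, or a recursion in which each of $A_1, A_2$ inherits its own Hodge--Newton hypothesis at which the key estimate can be re-applied, and the inductive hypothesis must be set up carefully because the diagonal blocks of the block-triangularized $UAV$ are not literally of the form $U'A_jV'$ with $U',V'\equiv I\pmod{m_F}$.
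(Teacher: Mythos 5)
Your coefficient estimate is sound, and for the part of the statement it reaches it is essentially the paper's own route in a cleaner packaging: the paper expands the characteristic polynomial coefficients as sums of principal minors (its Lemma 4.2) and controls the perturbation by Binet--Cauchy together with $W=VU\equiv I\pmod{m_F}$, reducing general $i$ to $i=1$ by the same wedge-power trick (its Proposition 4.3); your $c_k=\pm\tr\wedge^k$ formulation even avoids the paper's unnecessary detour through the block decomposition of Theorem 1.4 at this stage. Two small repairs: the bound on $k\times k$ minors by $\sigma_1\cdots\sigma_k$ is immediate from the paper's \emph{definition} of singular values (no Smith normal form, which need not exist when the value group is dense), and ``$+1$'' should just read ``strictly greater,'' since $m_F$ need not be principal. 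With these, your first two paragraphs correctly prove that at any index $i$ where $|\lambda_i|>|\lambda_{i+1}|$ and $\sigma_1\cdots\sigma_i=|\lambda_1\cdots\lambda_i|$, one has $|\lambda_1(UAV)\cdots\lambda_i(UAV)|=|\lambda_1\cdots\lambda_i|$, which is exactly the paper's Proposition 4.3.

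The obstacle you flag in your last paragraph is, however, a genuine and unfixable gap in the propagation step: with the Hodge--Newton hypothesis imposed at a \emph{single} index $i$, the full Newton polygons of $A$ and $UAV$ need not coincide, so no block-triangularization or induction can close the argument. Over $\Q_p$ take
\[
A=\begin{pmatrix} p^{-2} & 0 & 0\\ 0 & 0 & p^{-1}\\ 0 & p^{2} & 0\end{pmatrix},\qquad U=I,\qquad V=I+pE_{32}\equiv I\ (\mathrm{mod}\ p).
\]
Here the eigenvalues of $A$ have valuations $-2,\tfrac12,\tfrac12$, while $\sigma_1=|p^{-2}|=|\lambda_1|>|\lambda_2|$ (and even $\sigma_1>\sigma_2$), so the hypotheses of Theorem 1.4 hold with $i=1$. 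But $UAV=AV$ has lower block $\bigl(\begin{smallmatrix}1 & p^{-1}\\ p^{2} & 0\end{smallmatrix}\bigr)$ with characteristic polynomial $T^2-T-p$, so its eigenvalue valuations are $-2,0,1$: the Newton polygons agree at $i=1$ and at the determinant but differ at the middle index, where the Newton polygon of $A$ lies strictly above the Hodge polygon. This is precisely the regime you identified as out of reach of the estimate, and it shows the statement itself needs a stronger hypothesis. What your argument (and the paper's) actually yields is agreement of the two Newton polygons at every index where the touching-plus-gap hypothesis holds; full coincidence follows when this happens at every vertex of the Newton polygon, e.g.\ when the Newton and Hodge polygons of $A$ coincide, which covers the diagonal case of Buzzard--Calegari's Lemma 5. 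Be aware that the paper's own deduction of the theorem applies its Proposition 4.3 ``for all $i$'' although it is proved only at the distinguished index, so the difficulty you ran into reflects a defect of the statement rather than of your approach.
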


Theorem 1.5 is a generaliztion of Lemma 5 of \cite{KF}. Lemma 5 deals with the case when $A \in GL_{n}(F)$ is diagonal and it is the key ingredient of the proof of a conjecture about modular forms in \cite{KF}. By a similar argument on singular values, one can also deduce that they have the same Hodge polygon.

Finally, we give an archimedean analogue of Lemma 5 of \cite{KF}. 

\begin{proposition}
Suppose $U,V \in GL_{n}(\mathbb{C})$ and $D$ is diagonal. If every principal minor of $VU$ equals 1, then the Newton polygons of $D$ and $UDV$ are coincide. 
\end{proposition}

\section{Wedge product of matrices}
The aim of the chapter is to show the description of the singular values and eigenvalues of wedge product of matrices. We prove this in archimedean case. For nonarchimedean case, the method also works.

Let $A$ be an $n\times n$ matrix over $\mathbb{C}$ and $\mathbb{C}^n$ equipped with $L^2$ norm.

\begin{definition}
The \textbf{singular values} of A, denoted by $\sigma_{1} \geq \sigma_{2}\geq \dots \geq \sigma_{n}$, is defined to be the square roots of the eigenvalues of $A^{*}A$, where $A^{*}$ is the conjugate transpose of $A$.
\end{definition}

\begin{theorem} (Singular value decomposition)
There exist unitary $n \times n$ matrices $U$ and $V$ such that $UAV=Diag(\sigma_1,\dots,\sigma_n)$.
\end{theorem}

Now we introduce the construction of wedge product of matrices that will be frequently used later.

\begin{definition}
Let $M$ be a module over a ring $R$. The i-th \textbf{wedge product} $\wedge^{i} M$ of $M$ is the $R$-module generated  
by the symbols $m_{1}\wedge \dots \wedge m_{i}$ for $m_{1},\dots,m_{i} \in M$, modulo the relations that the map $(m_{1},\dots,m_{i}) \mapsto m_{1}\wedge \dots \wedge m_{i}$ is $R$-linaer in each variable (fix the others) and alternating.
\end{definition}

\begin{remark}
(1)\, If $M$ is freely generated by $e_{1},\dots,e_{n}$, then $\{e_{j_{1}}\wedge \dots \wedge e_{j_{i}}: 1 \leq j_{1} <\dots <j_{i} \leq n \}$ form a basis of $\wedge^{i} M$.

(2)\,The wedge product is a functor on the category of $R$-modules. That is, any linear transformation $T:M \rightarrow N$ induces a linear transformation $\wedge^{i} T: \wedge^{i} M \rightarrow \wedge^{i} N$.
\end{remark}
The following proposition investigates the singular values and eigenvalues of wedge product of matrices.

\begin{proposition}
The singular values (resp. eigenvalues) of $\wedge^{i} A$ are the i-fold products of the singular values (resp. eigenvalues) of $A$.
\end{proposition}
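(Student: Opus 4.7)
The plan is to handle eigenvalues and singular values separately, reducing each case to a diagonal or triangular matrix where the claim can be read off from Remark 2.4(1).

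For the eigenvalue statement, I would first invoke the Schur decomposition: there exists a unitary $P$ such that $P^{-1}AP = T$ is upper triangular, with the eigenvalues $\lambda_1,\dots,\lambda_n$ appearing on the diagonal of $T$ in some chosen order. By functoriality of $\wedge^i$ (Remark 2.4(2)), $\wedge^i A$ is conjugate to $\wedge^i T$, so they have the same eigenvalues. I then order the basis $\{e_{j_1}\wedge\cdots\wedge e_{j_i}\}$ lexicographically and compute directly that
\[
(\wedge^i T)(e_{j_1}\wedge\cdots\wedge e_{j_i}) = (T e_{j_1})\wedge\cdots\wedge (T e_{j_i}),
\]
and after expanding using the fact that $Te_{j_k} \in \mathrm{span}(e_1,\dots,e_{j_k})$, the result is $\lambda_{j_1}\cdots\lambda_{j_i}\, e_{j_1}\wedge\cdots\wedge e_{j_i}$ plus a combination of basis vectors strictly earlier in the lexicographic order. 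Hence $\wedge^i T$ is upper triangular in this ordering with diagonal entries the $i$-fold products $\lambda_{j_1}\cdots\lambda_{j_i}$, and these are exactly its eigenvalues.

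For the singular value statement, I would use Theorem 2.2 to write $UAV = D = \mathrm{Diag}(\sigma_1,\dots,\sigma_n)$ with $U,V$ unitary, so $A = U^{-1}DV^{-1}$ and
\[
\wedge^i A = (\wedge^i U^{-1})\,(\wedge^i D)\,(\wedge^i V^{-1}).
\]
Equip $\wedge^i \mathbb{C}^n$ with the Hermitian inner product that makes the lexicographic basis $\{e_{j_1}\wedge\cdots\wedge e_{j_i}\}$ orthonormal. A short check shows that if $W$ is unitary on $\mathbb{C}^n$, then $\wedge^i W$ preserves this inner product, i.e.\ is unitary on $\wedge^i\mathbb{C}^n$; equivalently $(\wedge^i W)^* = \wedge^i(W^*)$ with respect to this basis. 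Consequently the right-hand side above is a singular value decomposition of $\wedge^i A$, and since $\wedge^i D$ acts on the orthonormal basis by $e_{j_1}\wedge\cdots\wedge e_{j_i} \mapsto \sigma_{j_1}\cdots\sigma_{j_i}\, e_{j_1}\wedge\cdots\wedge e_{j_i}$, its diagonal entries $\sigma_{j_1}\cdots\sigma_{j_i}$ are precisely the singular values of $\wedge^i A$.

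The routine parts are the two basis computations for $\wedge^i T$ and $\wedge^i D$. The one step that needs genuine care is the compatibility of the adjoint with the wedge construction: checking that the chosen inner product on $\wedge^i\mathbb{C}^n$ really turns unitaries into unitaries, so that the factorization of $\wedge^i A$ above is an honest singular value decomposition. Once this is verified, both assertions follow simultaneously. For the nonarchimedean analogue alluded to by the author, the eigenvalue argument carries over unchanged via triangularization over an algebraic closure, while the singular value argument uses the nonarchimedean Smith-type decomposition in place of the unitary SVD.
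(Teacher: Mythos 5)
Your proposal follows essentially the same route as the paper: conjugate $A$ to upper triangular form for the eigenvalue statement, and use the singular value decomposition together with functoriality of $\wedge^{i}$ (via $(AB)\wedge(CD)=(A\wedge C)(B\wedge D)$) for the singular value statement. If anything, your write-up is tighter at the two points where the paper is loose: the paper computes with a full basis of eigenvectors of the triangularized matrix (which need not exist when $A$ is not diagonalizable), whereas your lexicographic triangularity argument for $\wedge^{i}T$ avoids this, and you explicitly verify that $\wedge^{i}W$ is unitary for unitary $W$, a fact the paper uses only implicitly when asserting that $\wedge^{i}A$ and $\wedge^{i}(UAV)$ have the same singular values.
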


\begin{proof}
The main idea of the proof is to take the appropriate decomposition of the matrix $A$. Fix a basis of $\mathbb{C}^{n}$, we get a basis of $\wedge^{i} A$ by taking i-fold exterior products of basis elements.

For the eigenvalues of $\wedge^{i} A$, according to the Jordan decomposition of $A$, it follows that there exsits $U\in GL_{n}(\mathbb{C})$ such that $U^{-1}AU$ is upper triangular with its eigenvalues on the diagonal. By acting on the basis, we have $(AB)\wedge (CD)= (A\wedge C)(B\wedge D)$, for any $A,B,C,D \in GL_{n}(\mathbb{C})$. Then
\[ \wedge^{i} (U^{-1}AU)= (\wedge^{i} U)^{-1} (\wedge^{i} A) (\wedge^{i} U). \]

Set $B=U^{-1}AU$. Denote the eigenvalues of $B$ and the corresponding eigenvectors by $\lambda_{1},\dots, \lambda_{n}$ and $v_{1},\dots,v_{n}$. Then

~$\quad (\wedge^{i} B)(v_{k_1} \wedge \dots \wedge v_{k_{i}})$~

~$=(Bv_{k_{1}})\wedge \dots (Bv_{k_{i}})$~

~$=(\lambda_{k_{1}}v_{k_{1}})\wedge \dots \wedge (\lambda_{k_{i}}v_{k_{i}})$~

~$=(\lambda_{k_{1}}\dots \lambda_{k_{i}})(v_{k_{1}}\wedge \dots \wedge v_{k_{i}}).$~

Since similar matrices have the same eigenvalues, we conclude that the eigenvalues of $\wedge^{i} A$ are the i-fold products of the eigenvalues of $A$.

Now consider the singular values of $\wedge^{i} A$. Let $\sigma_{1}, \dots, \sigma_{n}$ be the singular values of $A$. By applying Theorem 2.2, we can construct the unitary matrices $U, V$ such that  $UAV=Diag(\sigma_1,\dots,\sigma_n)$. Then
\[ \wedge^{i}(UAV)=(\wedge^{i} U)(\wedge^{i} A)(\wedge^{i} V). \]

Set $C=UAV=Diag(\sigma_1,\dots,\sigma_n)$ and $D=\wedge^{i} C$. Note that $C$ is a real matrix, we have $D^{*}D=D^{2}$. The singular values of $D$ is same as the eigenvlues of $D$. By the previous argument, it follows that the eigenvalues of $D$ are the i-fold products of singular values of $A$.
Finally, singular values of $\wedge^{i} A$ and $D=\wedge^{i}(UAV)$ are coincide, and consequently the singular values of $\wedge^{i} A$ are the i-fold products of singular values of $A$.
\end{proof}

\section{Nonarchimedean linear Hodge-Newton decomposition}
Let us first recall the definition of nonarchimedean field.

\begin{definition}
$(F,| \ |)$ is called \textbf{nonarchimedean field} if the norm satisfies the following

(a)\, $|x|=0$ if and only if $x=0$.

(b)\, $|xy|=|x||y|$.

(c)\, $|x+y|\leq max\{|x|,|y|\}$.
\end{definition}

Condition (c) is called strong triangle inequality and it takes a significant role in p-adic analysis. We call a nonarchimedean field is \textbf{complete} if every Cauchy sequence converges in $F$ with respect to the given norm.

\begin{assumption}
Throughout this section and the next, let $F$ be a complete nonarchimedean field and let $A=(a_{ij})$ be a $n\times n$ matrix over $F$.

View $A: F^{n}\rightarrow F^{n}$ as a linear transformation. The norm on $F^{n}$ is given by the supremum norm 

\[ |(x_{1},\dots,x_{n})|= max\{|x_{1}|,\dots,|x_{n}|\}. \]

and the corresonding operator norm is defined by

\[ |A|=\sup \limits_{v \in F^n-\{0\}} \frac{|Av|}{|v|}. \]

Consider the supremum norm on $F^n$ and $A$ acting on the standard basis of $F^n$, we have a simpler expression 

\[ |A|=max\{|a_{ij}|\}.\]
\end{assumption}

Now we introduce the notion of Newton polygon and Hodge polygon in order to give a geometric description of nonarchimedean linear Hodge-Newton polygon.

\begin{definition}
Given a sequence $s_{1},\dots,s_{n}$, one can define the \textbf{associated polygon} for this sequence to be the polygonal line joining the points

\[ (-n+i,s_{1}+\dots+s_{n})\qquad (i=0,\dots,n) \]
\end{definition}

\begin{definition}
Let $s_{1},\dots,s_{n}$ be the sequence with the property that $s_{1}+\dots+s_{i}$ is the minimum valuation of an $i\times i$ minor of $A$ for $i=1,\dots,n$. The associated polygon is called the \textbf{Hodge polygon} of $A$.

Define the \textbf{singular values} of $A$ by $\sigma_{1}=e^{-s_{1}},\dots,\sigma_{n}=e^{-s_{n}}$. Note 
that singular values are invariants under multiplication by a matrix in $GL_{n}(\textbf{o}_{F})$ and $\sigma_{1}=|A|$.
\end{definition}

\begin{definition}
Let $\lambda_{1},\dots \lambda_{n}$ be the eigenvalues of $A$ in some algebraic extension of $F$ with the unique norm extension. Arrange $|\lambda_{1}| \geq \dots \geq |\lambda_{n}|$. The associated polygon is called the \textbf{Newton polygon} of $A$.
\end{definition}

Before the nonarchimedean linear Hodge-Newton decomposition, let us introduce the definition of the generalized eigenspace.The motivation is that eigenspaces are not big enough to decompose a vector space and we need a good way to enlarge them.

\begin{definition}
Suppose $T$ is a linear transformation of vector space $V$. The \textbf{$\lambda$-generalized eigenspace} of $T$ is
\[ V_{[\lambda]}:=\{v\in V| (T-\lambda I)^{m} v=0. \quad for\, some\, m >0 \} \subset V.\]

Clearly 
\[V_{\lambda} \subset V_{[\lambda]}.\]
\end{definition}

\begin{theorem} (Theorem 8.21 \cite{Art})
Suppose $V$ is a finite dimensional vector space and $T$ is a linear transformation of vector space $V$. Denote all the distinct eigenvalues of $T$ by $\lambda_{1},\dots,\lambda_{r}$. Then the space $V$ is the direct sum of the generalized eigenspaces

\[ V=\oplus_{i=1}^{r} V_{[\lambda_i]}. \]
\end{theorem}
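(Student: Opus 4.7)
The plan is to establish this decomposition by applying primary decomposition to $V$ viewed as an $F[x]$-module, where $x$ acts as $T$. Since by hypothesis all distinct eigenvalues $\lambda_1,\dots,\lambda_r$ lie in the ground field, the minimal polynomial of $T$ splits as $m(x)=\prod_{i=1}^{r}(x-\lambda_i)^{e_i}$ for some positive integers $e_i$. The factors $p_i(x):=(x-\lambda_i)^{e_i}$ are pairwise coprime in $F[x]$, since distinct linear factors over a field generate the unit ideal.

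Next, I would invoke Bezout's identity for the collectively coprime family $\{q_i(x):=m(x)/p_i(x)\}_{i=1}^{r}$: write $1=\sum_{i=1}^{r} a_i(x)\, q_i(x)$ in $F[x]$, and set $\pi_i:=a_i(T)\,q_i(T)\in\End(V)$. Three routine verifications then complete the structural picture: (i) $\sum_i \pi_i=I$, so every $v\in V$ decomposes as $v=\sum_i \pi_i v$; (ii) $\pi_i\pi_j=0$ for $i\ne j$, since $a_i a_j q_i q_j$ is divisible by $m$; and (iii) $p_i(T)\,\pi_i=a_i(T)\,m(T)=0$, so the image of $\pi_i$ lies in $W_i:=\ker p_i(T)$. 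Together these yield an internal direct sum $V=\bigoplus_{i=1}^{r} W_i$.

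It remains to identify each $W_i$ with the generalized eigenspace $V_{[\lambda_i]}$. The inclusion $W_i\subset V_{[\lambda_i]}$ is immediate from Definition 3.6. For the reverse inclusion, suppose $(T-\lambda_i I)^N v=0$ and decompose $v=\sum_j w_j$ with $w_j\in W_j$; then $(T-\lambda_i I)^N w_j=0$ for every $j$. When $j\ne i$, the restriction $T|_{W_j}-\lambda_j I$ is nilpotent, so $T-\lambda_i I = (T-\lambda_j I)+(\lambda_j-\lambda_i)I$ is invertible on $W_j$ (the sum of a nilpotent and an invertible scalar operator); hence $w_j=0$, and $v=w_i\in W_i$.

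The main conceptual step is extracting the projections $\pi_i$ via Bezout; the remaining verifications are straightforward module-theoretic bookkeeping. One subtle point worth flagging is that the theorem as stated tacitly requires all eigenvalues of $T$ to lie in the base field — when this fails, one passes to an algebraic extension with the unique extended norm, as is already implicit in the definition of the Newton polygon (Definition 3.5).
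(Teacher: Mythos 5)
Your proof is correct. Note first that the paper does not prove this statement at all: it is quoted verbatim from Artin's \emph{Algebra} (Theorem 8.21) and used as a black box, so there is no in-paper argument to compare against. Your route is the standard primary-decomposition argument: factor the minimal polynomial as $\prod_i (x-\lambda_i)^{e_i}$, use Bezout for the collectively coprime cofactors $q_i=m/p_i$ to manufacture the idempotent projectors $\pi_i=a_i(T)q_i(T)$, and then identify $\ker p_i(T)$ with $V_{[\lambda_i]}$ via the observation that $T-\lambda_i I$ is invertible on $W_j$ for $j\neq i$ (nilpotent plus nonzero scalar). All the steps check out; the only piece you wave at as ``bookkeeping'' is the directness of $\sum_i W_i$, which needs the one-line remark that $q_j(T)$ kills $W_i$ for $i\neq j$ (so $\pi_j$ acts as $0$ on $W_i$ and as the identity on $W_j$), and that remark is immediate from your setup. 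Your flagged caveat --- that the statement tacitly assumes the eigenvalues lie in the base field --- is apt and is exactly how the paper itself navigates the issue in its application: in the proof of Theorem 3.8 the field $F$ need not contain the individual $\lambda_k$, so the paper cites Christol (Proposition 1.5.1) to get the grouped factors $P(T)=(T-\lambda_1)\cdots(T-\lambda_i)$ and $Q(T)=(T-\lambda_{i+1})\cdots(T-\lambda_n)$ with coefficients in $F$, and then runs precisely your two-factor Bezout-projector argument ($PB+QC=1$) to split off the first $i$ generalized eigenspaces. So your proof is not only valid but methodologically consistent with the toolkit the paper actually deploys.
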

 
\begin{theorem} (Nonachimedean linear Hodge-Newton decomposition)

\vspace{0.1cm}
Suppose that for some $i \in \{1,\dots,n-1\}$ we have 

\[ |\lambda_{i}|>|\lambda_{i+1}|, \quad \sigma_{1}\dots \sigma_{i} = |\lambda_{1}\dots \lambda_{i}|. \]

\vspace{0.2cm}
Then there exists $U \in GL_{n}(\textbf{o}_F)$ such that $U^{-1}AU$ is block upper triangular, with the top left block accounting for the first $i$ singular values and eigenvalues and the bottom right block accounting for the others.

Moreover, if $\sigma_{i}>\sigma_{i+1}$, then $U^{-1}AU$ is block diagonal.
\end{theorem}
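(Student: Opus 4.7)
The plan is to first factor the characteristic polynomial as $\phi_A(x) = \chi(x) g(x)$ in $F[x]$, with $\chi$ monic of degree $i$ carrying the roots $\lambda_1, \ldots, \lambda_i$ and $g$ monic of degree $n-i$ carrying the rest. The inequality $|\lambda_i| > |\lambda_{i+1}|$ creates a genuine corner in the Newton polygon of $\phi_A$, so by the standard Hensel-type factorization of polynomials with disjoint Newton polygons, $\chi$ and $g$ already lie in $F[x]$ (without adjoining any $\lambda_j$) and are coprime there. Setting $V_1 = \ker \chi(A)$ and $V_2 = \ker g(A)$ then gives, by Theorem 3.7, an $A$-invariant decomposition $F^n = V_1 \oplus V_2$ of dimensions $i$ and $n-i$.

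The heart of the argument is to lift this decomposition to $\textbf{o}_F^n$, i.e.\ to show that $V_1 \cap \textbf{o}_F^n$ is a primitive rank-$i$ direct summand of $\textbf{o}_F^n$; a basis of it extended to a basis of $\textbf{o}_F^n$ then supplies the required $U \in GL_n(\textbf{o}_F)$. This is where the equality $\sigma_1 \cdots \sigma_i = |\lambda_1 \cdots \lambda_i|$ enters, via Proposition 2.5. Passing to $\wedge^i A$ acting on $\wedge^i F^n$, the top singular value is $\sigma_1 \cdots \sigma_i$ and the top eigenvalue in absolute value is $|\lambda_1 \cdots \lambda_i|$, so the hypothesis is exactly that the operator norm of $\wedge^i A$ is realized by its dominant eigenvalue. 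The corresponding eigenvector is $v_1 \wedge \cdots \wedge v_i$ for any basis $v_1, \ldots, v_i$ of $V_1$. After rescaling this vector to have sup-norm $1$ in $\wedge^i \textbf{o}_F^n$, the Plücker correspondence identifies it with a primitive rank-$i$ sublattice of $\textbf{o}_F^n$, which must coincide with $V_1 \cap \textbf{o}_F^n$; the resulting $U^{-1}AU$ is block upper triangular, and the identity $|\det A_{11}| = |\lambda_1 \cdots \lambda_i| = \sigma_1 \cdots \sigma_i$ shows that the top-left block realizes the minimum valuation of an $i \times i$ minor, matching the Hodge and Newton data.

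For the block-diagonal strengthening under $\sigma_i > \sigma_{i+1}$, I would further conjugate $\begin{pmatrix} A_{11} & A_{12} \\ 0 & A_{22} \end{pmatrix}$ by $\begin{pmatrix} I & X \\ 0 & I \end{pmatrix}$ to eliminate $A_{12}$; this reduces to the Sylvester equation $A_{11} X - X A_{22} = -A_{12}$. The formal expansion $X = -\sum_{k \geq 0} A_{11}^{-(k+1)} A_{12} A_{22}^k$ converges in $F$ because $|A_{11}^{-1}| \cdot |A_{22}|$ is bounded by $\sigma_{i+1}/\sigma_i < 1$, and the strict inequality $\sigma_i > \sigma_{i+1}$ is exactly what keeps every term in $\textbf{o}_F$, yielding $X \in M_{i, n-i}(\textbf{o}_F)$.

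The main obstacle is the Plücker/lifting step in the second paragraph. Without the equality $\sigma_1 \cdots \sigma_i = |\lambda_1 \cdots \lambda_i|$ the wedge $v_1 \wedge \cdots \wedge v_i$ would fail to realize the sup-norm of $\wedge^i A$, reflecting a genuine failure of the Newton-polygon splitting to match the Hodge-polygon splitting integrally; one must use the equality precisely to force both splittings to be witnessed by a single primitive sublattice, which is the content of reducing the general $i$ case to a rank-one statement via the exterior-power functoriality established in Section 2.
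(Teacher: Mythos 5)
Your overall route -- split the characteristic polynomial along the Newton-polygon corner created by $|\lambda_i|>|\lambda_{i+1}|$, take the invariant subspace $V_1$, saturate it inside $\textbf{o}_F^n$, extend to an integral basis to get a block upper triangular conjugate, then remove the off-diagonal block by a convergent unipotent conjugation when $\sigma_i>\sigma_{i+1}$ -- is the same as the paper's, but the key hypothesis is misplaced and this leaves a real gap. The equality $\sigma_1\cdots\sigma_i=|\lambda_1\cdots\lambda_i|$ is \emph{not} what makes $V_1\cap\textbf{o}_F^n$ a primitive direct summand: for any subspace $W\subset F^n$ the lattice $W\cap\textbf{o}_F^n$ is saturated and splits off automatically (the quotient is finitely generated and torsion-free over the valuation ring), so your Pl\"ucker detour adds nothing and the fact that the dominant eigenvector of $\wedge^iA$ realizes the operator norm is never actually used; the block upper triangular form needs only $|\lambda_i|>|\lambda_{i+1}|$. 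Where the equality genuinely enters is the ``accounting'' clause of the theorem: that the top-left block $B$ (your $A_{11}$) has singular values exactly $\sigma_1,\dots,\sigma_i$ and $A_{22}$ the remaining ones. Your only argument here is $|\det A_{11}|=|\lambda_1\cdots\lambda_i|=\sigma_1\cdots\sigma_i$, which fixes the \emph{product} of the singular values of $A_{11}$ but not the individual ones; for a block upper triangular matrix the singular values of the diagonal blocks need not comprise those of the whole matrix (e.g. $\begin{pmatrix}1&p^{-1}\\0&p\end{pmatrix}$ over $\Q_p$ has singular values $p,p^{-2}$ while the diagonal blocks have $1,p^{-1}$). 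The paper closes exactly this gap: by Cramer's rule each entry of $B^{-1}C$ is an $i\times i$ minor of $U^{-1}AU$ divided by $\det B$, and since $|\det B|=\sigma_1\cdots\sigma_i$ is the largest possible norm of an $i\times i$ minor, $B^{-1}C$ is integral; then $U^{-1}AU=\mathrm{diag}(B,D)\begin{pmatrix}I&B^{-1}C\\0&I\end{pmatrix}$ with the second factor in $GL_n(\textbf{o}_F)$, which forces the accounting.

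The same omission undermines your block-diagonalization step. You need every term $A_{11}^{-(k+1)}A_{12}A_{22}^{k}$ of the Sylvester series to lie in $\textbf{o}_F$, and you assert that the strict inequality $\sigma_i>\sigma_{i+1}$ is what guarantees this. It is not: the $k=0$ term is $A_{11}^{-1}A_{12}=B^{-1}C$, whose integrality is precisely the Cramer's-rule/equality argument above and is independent of the strict inequality, which only supplies the decay factor $(\sigma_{i+1}/\sigma_i)^k$. Moreover the bounds $|A_{11}^{-1}|\le\sigma_i^{-1}$ and $|A_{22}|\le\sigma_{i+1}$ you invoke for convergence are themselves part of the unproved accounting (they can be extracted from the minor inequalities together with $|\det A_{11}|=\sigma_1\cdots\sigma_i$, but that argument has to be given). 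Once $B^{-1}C$ is shown integral and those two bounds are in place, your closed-form series for the Sylvester equation $A_{11}X-XA_{22}=-A_{12}$ is a legitimate repackaging of the paper's iterative conjugation $U_1U_2\cdots$; without them, both the ``accounting'' statement and the integrality of the conjugating matrix are unjustified.
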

\begin{proof}
Here we follow the proof in \cite{Ked} and the construction of the matrix $U$ will be used later.

For the first claim, from Proposition 1.5.1 of \cite{Chr}, one can deduce that $P(T)=(T-\lambda_{1})\dots (T-\lambda_{i})$ and $Q(T)=(T-\lambda_{i+1})\dots (T-\lambda_{n})$ have coefficients in $F$. Since $P$ and $Q$ have no common roots, there exists $B,C \in F[T]$ such that $PB+QC=1$. Hence $P(A)B(A)$ and $Q(A)C(A)$ give the projectors for a direct sum decomposition separating the first $i$ generalized eigenspaces from the others.

In the language of basis, one can find a basis $v_{1},\dots,v_{n}$ of $F^n$ such that $v_{1},\dots,v_{i}$ span the generalized eigenspaces with eigenvalues $\lambda_{1},\dots,\lambda_{i}$ and $v_{i+1},\dots,v_{n}$ span the generalized eigenspaces with eigenvalues $\lambda_{i+1},\dots,\lambda_{n}$. Let $\omega_{1},\dots,\omega_{i}$ be s basis of $\textbf{o}_{F}^n \cap (Fv_{1}+\dots+Fv_{i})$ and then extends to the basis $\omega_{1},\dots,\omega_{n}$ of $\textbf{o}_{F}^n$. Let $e_{1},\dots,e_{n}$ be the standard basis of $F^n$ and define $U\in GL_{n}(\textbf{o}_{F})$ as $\omega_{j}=\sum_{i} U_{ij}e_{i}$. Then

\[ U^{-1}AU=\begin{pmatrix}
B & C\\
O & D\\
\end{pmatrix} \]

Writing
\[ U^{-1}AU=\begin{pmatrix}
B & O\\
O & D\\
\end{pmatrix}
\begin{pmatrix}
I_{i} & B^{-1}C \\
O & I_{n-i}\\
\end{pmatrix},\]

\vspace{0.2cm}
we deduce that the singular values of $B$ and $D$ together must comprise $\sigma_{1},\dots,\sigma_{n}$. Hence the product of the singular values of $B$ equals $\sigma_{1}\dots \sigma_{i}$.

Furthermore, $B^{-1}C$ must have entries in $\textbf{o}_{F}$. In fact, by Cramer's rule, each entry of $B^{-1}C$ is an $i \times i$ minor of $A$ divided by the determinant of $B$. Denote the singular values of $B$ by $\sigma^{\prime}_{1},\dots,\sigma^{\prime}_{i}$, then $\sigma_{1}\dots \sigma_{i}=\sigma^{\prime}_{1} \dots \sigma^{\prime}_{i}= \sqrt{det(B^{*}B)}=|det(B)|$. It follows that norm of entry of $B^{-1}C$ is no more than $|\lambda_{1}\dots \lambda_{i}|=\sigma_{1}\dots \sigma_{i}=|det(B)|$ and it leads to the fact as desired.

For the second claim, now assume that $\sigma_{i} >\sigma_{i+1}$. In this case, conjugating by the matrix

\[ \begin{pmatrix}
I_{i} & -B^{-1}C \\
O & I_{n-i} \\
\end{pmatrix} \]

Then get a new matrix 

\[ U^{-1}AU=\begin{pmatrix}
B & C_{1}\\
O & D\\
\end{pmatrix}. \]

\vspace{0.2cm}
where $C_{1}=B^{-1}CD$. Note that $|C_{1}| \leq |B^{-1}||C||D|= \sigma_{i}^{-1} |C| \sigma_{i+1} < |C|$ implies $|C_{k}| \leq (\frac{\sigma_{i+1}}{\sigma_{i}})^{k}|C| \rightarrow 0$ as $k \rightarrow \infty$. This process converges.
In this way, we obtain the sequence of matrices 

\[ U_k=\begin{pmatrix}
I_{i} & -B^{-1}C_{k-1} \\
O & I_{n-i} \\
\end{pmatrix}\in GL_{n}(\textbf{o}_{F}). \]

\vspace{0.2cm}
Set $C_{0}=C$. Then $|U_k-Id|=\frac{C_{k-1}}{\sigma_{i}}\rightarrow 0$ as $k \rightarrow \infty$ and so the convergent product $U=U_{1}U_{2}\dots$ satisfies 

\[ U^{-1}AU=\begin{pmatrix}
B & O\\
O & D\\
\end{pmatrix}, \]

as desired.
\end{proof}

\begin{remark}
The geometric meaning of the conditions of Theorem 3.6, says that the Newton polygon has a vertex with $x$-coordinate $-n+i$ and the vertex also lies on the Hodge polygon.
\end{remark}

\section{Applications}

Let us first present the main application of linear nonarchimedean Hodge-Newton decomposition.

\begin{theorem}
Under the hypotheses and notations of Theorem 3.8, if $U,V \in GL_{n}(\textbf{o}_F)$ are congruent to the identity matrix modulo $m_F$, then the Newton polygons of $A$ and $UAV$ are coincide.
\end{theorem}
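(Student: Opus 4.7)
The plan is to reduce to the block-upper-triangular case via Theorem 3.8 and then exploit the wedge-product identity $c_k(M)=\tr(\wedge^{k}M)$, where $c_k$ denotes the $k$-th elementary symmetric polynomial of the eigenvalues. Applying Theorem 3.8 to $A$ at the prescribed position $i$ produces $W\in GL_n(\textbf{o}_F)$ with
\[
W^{-1}AW=A':=\begin{pmatrix}B & C\\ 0 & D\end{pmatrix},
\]
where (as extracted from the proof of Theorem 3.8) $B$ carries the top $i$ eigenvalues and singular values of $A$. Since Newton polygons are invariant under similarity and $U':=W^{-1}UW$, $V':=W^{-1}VW$ still lie in $GL_n(\textbf{o}_F)$ and remain congruent to the identity modulo $m_F$, the problem reduces to showing $\mathrm{NP}(U'A'V')=\mathrm{NP}(A')$; the latter is the concatenation of $\mathrm{NP}(B)$ and $\mathrm{NP}(D)$ since $\det(TI-A')=\det(TI-B)\det(TI-D)$.

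The core estimate comes from
\[
c_k(U'A'V')-c_k(A')=\tr\bigl((\wedge^{k}U')(\wedge^{k}A')(\wedge^{k}V')-\wedge^{k}A'\bigr).
\]
Because the functor $\wedge^{k}$ carries matrices congruent to $I$ modulo $m_F$ to matrices with the same property, one has $\wedge^{k}U'=I+\tilde M$ and $\wedge^{k}V'=I+\tilde N$ with $|\tilde M|,|\tilde N|<1$. Each of the three error terms appearing in the expansion is then dominated, in the nonarchimedean operator norm, by $|\wedge^{k}A'|=\sigma_1\cdots\sigma_k$ (Definition 3.3 combined with Proposition 2.5), giving
\[
|c_k(U'A'V')-c_k(A')|<\sigma_1\cdots\sigma_k\qquad(k=1,\dots,n).
\]

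At $k=i$ the hypothesis forces $|c_i(A')|=\sigma_1\cdots\sigma_i$, because in $c_i=e_i(\lambda_1,\dots,\lambda_n)$ the top term $\lambda_1\cdots\lambda_i$ strictly dominates all others under $|\lambda_i|>|\lambda_{i+1}|$. The strong triangle inequality then yields $|c_i(U'A'V')|=\sigma_1\cdots\sigma_i$. Since conjugation and left/right multiplication by elements of $GL_n(\textbf{o}_F)$ preserve singular values, $U'A'V'$ shares the singular values of $A$; combining this with Weyl's inequality and the general bound $|c_i|\le|\lambda_1\cdots\lambda_i|$ pins down $|\lambda_1(U'A'V')\cdots\lambda_i(U'A'V')|=\sigma_1\cdots\sigma_i$. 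A short comparison of Newton and Hodge slopes at positions $i$ and $i+1$ (using $v(\lambda_i)\le s_i$ and $v(\lambda_{i+1})\ge s_{i+1}$, both consequences of Weyl and of the equality at $i$) applied to $U'A'V'$ produces the remaining strict inequality $|\lambda_i(U'A'V')|>|\lambda_{i+1}(U'A'V')|$. Theorem 3.8 can therefore be invoked for $U'A'V'$ at position $i$, producing a block-upper-triangular conjugate $\begin{pmatrix}B^{*} & C^{*}\\ 0 & D^{*}\end{pmatrix}$.

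The remaining and hardest step is to show $\mathrm{NP}(B^{*})=\mathrm{NP}(B)$ and $\mathrm{NP}(D^{*})=\mathrm{NP}(D)$. I would proceed by induction on $n$, with the case $n=1$ trivial. Tracking the integral bases of generalized eigenspaces constructed in the proof of Theorem 3.8 for both $A'$ and its perturbation $U'A'V'$, one should be able to express $B^{*}$ as $U_B B V_B$ with $U_B,V_B\in GL_i(\textbf{o}_F)$ congruent to the identity modulo $m_F$ (and analogously for $D^{*}$), whereupon the inductive hypothesis closes the argument. The delicate point is verifying this integrality and congruence for $U_B$ and $V_B$: the two instances of Theorem 3.8 — one for $A'$ and one for $U'A'V'$ — rest on different generalized-eigenspace data, and matching them carefully is where the main work lies.
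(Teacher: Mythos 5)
Your opening three steps are, in substance, the paper's own argument in different packaging: the identity $c_k(M)=\tr(\wedge^k M)$ together with $\wedge^k U'\equiv I \pmod{m_F}$ and $|\wedge^k A'|=\sigma_1\cdots\sigma_k$ is exactly what the paper gets from Lemma 4.2 (characteristic polynomial coefficients as sums of principal minors) plus Binet--Cauchy plus the ultrametric inequality, and it yields, as in Proposition 4.3, that the product of the $i$ largest eigenvalue norms of $UAV$ is still $\sigma_1\cdots\sigma_i=|\lambda_1\cdots\lambda_i|$ at the touching index $i$. (One small caveat: your strict gap $|\lambda_i(U'A'V')|>|\lambda_{i+1}(U'A'V')|$ only follows from $|\lambda_i(M)|\ge\sigma_i$ and $|\lambda_{i+1}(M)|\le\sigma_{i+1}$ when $\sigma_i>\sigma_{i+1}$; the paper's Proposition 4.3 quietly assumes this too.) Up to that point you are on the paper's track.

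The genuine gap is your final step, and it is not merely ``delicate'': it cannot be completed from the hypotheses as you have set things up. The inductive hypothesis you want to apply to $B^{*}=U_BBV_B$ is the theorem itself, which carries the Hodge--Newton touching hypotheses; the block $B$ on its own need not satisfy any such hypothesis (its Newton polygon may lie strictly above its Hodge polygon at every interior index), and without them the statement is simply false. For instance (residue characteristic $\neq 2$), $B=\begin{pmatrix} p & 1\\ p^2-p^5 & p\end{pmatrix}$ has Newton slopes $1,4$, while $U_BB$ with $U_B=\begin{pmatrix}1+p & 0\\ -2p & 1\end{pmatrix}\equiv I \pmod{m_F}$ has trace $p^2$ and determinant of valuation $5$, hence Newton slopes $2,3$; embedding this as $A=\mathrm{diag}(B,p^{10}I_2)$, $U=\mathrm{diag}(U_B,I_2)$, $V=I$ even satisfies all the hypotheses at $i=2$ (including $\sigma_2>\sigma_3$) yet the two Newton polygons differ inside the top block. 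So no amount of careful matching of the two generalized-eigenspace decompositions will let the induction close: what the hypotheses at the single index $i$ actually buy --- and all your estimate at $k=i$ proves --- is that the Newton polygon of $UAV$ passes through the same vertex at $x$-coordinate $-n+i$. Equality of the entire polygons requires the touching and gap hypotheses at every relevant index (this is the situation of Buzzard--Calegari's Lemma 5, where $A$ is diagonal and Newton equals Hodge), and that is how the paper concludes: it applies its Proposition 4.3 for each such $i$ (the ``$\forall i$'' in its proof of Theorem 4.1) and reads off each $|\lambda_k(UAV)|$ from the successive products, never descending into the blocks. The fix for your write-up is to drop the block induction entirely and instead run your step-3 conclusion at every index where the hypotheses hold, after making that strengthened hypothesis explicit.
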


One can start with the following lemma. It uses minors to characterize the coefficients of the characteristic polynomial.

\begin{lemma}
Denote $A=(a_{ij})_{n \times n}$. Then the characteristic polynomial of $A$ has the expression

~$\phi(\lambda)=det(\lambda I-A)$~

~$\, \qquad =\lambda^{n}-(a_{11}+\dots+a_{nn})\lambda^{n-1}+(\sum_{1 \leq i_{1} < i_{2} \leq n} A\begin{pmatrix}
i_{1} & i_{2} \\
i_{1} & i_{2} \\
\end{pmatrix}) \lambda^{n-2}+\dots+(-1)^k (\sum_{1\leq i_{1} < i_{2} <\dots<i_{k}\leq n}A \begin{pmatrix}
i_{1} & i_{2} & \dots & i_{k}\\
i_{1} & i_{2} & \dots & i_{k}\\
\end{pmatrix}) \lambda^{n-k} +\dots+(-1)^n det(A)
.$~

\begin{proof}
Consider the Taylor expansion of $\phi$ at 0 as follows. 
\[ \phi(\lambda)=\phi(0)+\phi^{'}(0) \lambda +\frac{\phi^{''}(0)}{2!} \lambda^2+\dots+\frac{\phi^{(k)}(0)}{k!}\lambda^k+\dots+\frac{\phi^{(n)}(0)}{n!} \lambda^n \]

\qquad $\phi(0)=
\begin{vmatrix}
\lambda-a_{11} & -a_{12} & \dots &-a_{1n} \\
-a_{21} & \lambda-a_{22} & \dots &-a_{2n} \\
\vdots & \vdots & & \vdots \\
-a_{n1} & -a_{n2} & \dots & \lambda-a_{nn} \\
\end{vmatrix}$

Then 
\[ \phi(0)=(-1)^{n} det(A). \]

\qquad $\phi^{'}(0)=\begin{vmatrix}
1 & -a_{12} & \dots &-a_{1n} \\
0 & \lambda-a_{22} & \dots &-a_{2n} \\
\vdots & \vdots & & \vdots \\
0 & -a_{n2} & \dots & \lambda-a_{nn} \\
\end{vmatrix}+\dots+\begin{vmatrix}
\lambda-a_{11} & -a_{12} & \dots &0 \\
-a_{21} & \lambda-a_{22} & \dots &0 \\
\vdots & \vdots & & \vdots \\
-a_{n1} & -a_{n2} & \dots & 1 \\
\end{vmatrix}$

implies 

\[ \phi^{'}(0)=(-1)^{n} \sum_{1 \leq i_{1} < \dots <i_{n-1} \leq n} A \begin{pmatrix}
 i_{1}& \dots & i_{n-1} \\
 i_{1}& \dots & i_{n-1} \\
 \end{pmatrix}
.\]

For higher order, $\phi^{(k)}(0)$ equals sum of $n(n-1)\dots (n-k+1)$ terms. Fix the order $1 \leq i_{1} < \dots <i_{k} \leq n$, $A \begin{pmatrix}
 i_{1}& \dots & i_{k} \\
 i_{1}& \dots & i_{k} \\
 \end{pmatrix}$ repeats $\frac{n(n-1)\dots(n-k+1)}{\binom{n}{k}}=k!$ times. 
 
Hence 
\[ \phi^{(k)}(0)=(-1)^{k} k! \sum_{1 \leq i_{1} < \dots <i_{k} \leq n} A \begin{pmatrix}
 i_{1}& \dots & i_{k} \\
 i_{1}& \dots & i_{k} \\
 \end{pmatrix}
.\]
\end{proof}

\end{lemma}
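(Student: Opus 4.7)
The plan is to derive the expansion by iterated multilinearity of the determinant in its columns, bypassing the need to compute the higher derivatives $\phi^{(k)}(0)$ explicitly as in the author's Taylor expansion. The advantage of this approach is that the principal minors and their signs fall out at once from one clean combinatorial expansion, with essentially no case analysis.

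First I would write the $j$-th column of $\lambda I - A$ as $\lambda e_{j} + (-A_{j})$, where $e_{j}$ is the $j$-th standard basis vector and $A_{j}$ denotes the $j$-th column of $A$. Applying multilinearity in each column independently distributes the determinant over $2^{n}$ terms, indexed by the subsets $S \subseteq \{1,\dots,n\}$ recording which columns contribute the $\lambda e_{j}$ part:
\[
\det(\lambda I - A) \;=\; \sum_{S \subseteq \{1,\dots,n\}} \lambda^{|S|}(-1)^{n-|S|}\det(N_{S}),
\]
where $N_{S}$ is the matrix whose $j$-th column equals $e_{j}$ for $j \in S$ and $A_{j}$ for $j \notin S$.

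Next I would identify $\det(N_{S})$ with a principal minor of $A$. Simultaneously permuting the rows and columns of $N_{S}$ so that the indices in $S$ come first and those in $T := \{1,\dots,n\} \setminus S$ come last yields a block upper-triangular matrix whose diagonal blocks are the identity $I_{|S|}$ and the principal submatrix $A_{T,T}$ of $A$ indexed by $T$; the upper-right block, which consists of various $a_{ij}$ with $i \in S$, $j \in T$, is irrelevant. Because the same permutation is applied to rows and columns, the two sign contributions cancel and $\det(N_{S}) = \det(A_{T,T})$.

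Finally, reindexing by $k := |T| = n - |S|$ and collecting the coefficient of $\lambda^{n-k}$ gives
\[
\det(\lambda I - A) \;=\; \sum_{k=0}^{n}(-1)^{k}\lambda^{n-k}\!\!\sum_{1 \le i_{1} < \cdots < i_{k} \le n}\!\!A\!\begin{pmatrix} i_{1} & \cdots & i_{k} \\ i_{1} & \cdots & i_{k} \end{pmatrix},
\]
which is exactly the stated formula. I anticipate no genuine obstacle: the only point needing care is verifying that the combined row-column permutation contributes a net sign of $+1$, which is immediate from $\mathrm{sgn}(\pi)^{2} = 1$.
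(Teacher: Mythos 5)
Your argument is correct, and it is a genuinely different route from the paper's. The paper proves the lemma by writing $\phi(\lambda)$ as its Taylor expansion at $0$ and computing the derivatives $\phi^{(k)}(0)$ of the determinant, differentiating column by column and then counting how many of the resulting terms reproduce each principal minor (a factor of $k!$), with the signs tracked at the end. You instead expand $\det(\lambda I - A)$ directly by multilinearity in the columns, writing each column as $\lambda e_{j} + (-A_{j})$, so that the $2^{n}$ resulting terms are indexed by subsets $S$, and you identify $\det(N_{S})$ with the principal minor $A_{T,T}$ ($T$ the complement of $S$) via a simultaneous row--column permutation whose sign contributions cancel. Your reduction of $\det(N_S)$ to a block upper-triangular form is sound, and collecting terms by $k=|T|$ yields exactly the stated coefficient $(-1)^{k}\sum_{i_1<\dots<i_k} A\begin{pmatrix} i_1 & \dots & i_k\\ i_1 & \dots & i_k\end{pmatrix}$ of $\lambda^{n-k}$. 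What your approach buys: all coefficients appear at once from a single algebraic identity, with the signs and the enumeration of principal minors built in, so there is no need for the derivative computations or the multiplicity-counting step of the paper (where, incidentally, the bookkeeping is delicate --- the paper's displayed formulas for $\phi'(0)$ and $\phi^{(k)}(0)$ carry sign/index conventions that must be matched against the coefficient of $\lambda^{n-k}$ with some care, e.g.\ the coefficient of $\lambda$ is $(-1)^{n-1}$ times the sum of $(n-1)\times(n-1)$ principal minors). The paper's calculus-style proof, on the other hand, requires no combinatorial reindexing and works directly with the polynomial $\phi$; but your purely multilinear argument is more self-contained and works verbatim over any commutative ring, which is all that is needed for the application in Section 4.
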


\begin{proposition}
Under the hypotheses and notations of Theorem 3.8, suppose that $U,V \in GL_{n}(\textbf{o}_{F})$ are congruent to the identity matrix modulo $m_{F}$. Then the product of the $i$ largest eigenvalues of $UAV$ again has norm $|\lambda_{1} \dots \lambda_{i}|$.
\end{proposition}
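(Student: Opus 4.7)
The plan is to reduce the claim to the single norm identity $|E_i(UAV)| = |\lambda_1 \cdots \lambda_i|$, where $E_i(M)$ denotes the sum of the principal $i \times i$ minors of $M$ — by Lemma 4.2, this is (up to a sign) the coefficient of $\lambda^{n-i}$ in the characteristic polynomial of $M$. Once this identity is in hand, let $\mu_1, \ldots, \mu_n$ be the eigenvalues of $UAV$ ordered by decreasing norm. Then $E_i(UAV) = e_i(\mu_1, \ldots, \mu_n)$, so the strong triangle inequality (the largest summand in norm being $\mu_1 \cdots \mu_i$) gives $|E_i(UAV)| \leq |\mu_1 \cdots \mu_i|$. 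On the other hand, $UAV$ has the same singular values as $A$, since left/right multiplication by $GL_n(\mathbf{o}_F)$ is an isometry; so the nonarchimedean Weyl inequality $|\mu_1 \cdots \mu_i| \leq \sigma_1 \cdots \sigma_i$, combined with the hypothesis $\sigma_1 \cdots \sigma_i = |\lambda_1 \cdots \lambda_i|$, squeezes $|\mu_1 \cdots \mu_i|$ to $|\lambda_1 \cdots \lambda_i|$, yielding the proposition.

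The heart of the argument is therefore the identity. Writing $M[T, J]$ for the $i \times i$ minor of a matrix $M$ with row indices $T$ and column indices $J$, a double application of the Cauchy-Binet formula gives, for each $i$-subset $T$,
$$(UAV)[T, T] = \sum_{|J| = |K| = i} U[T, J]\, A[J, K]\, V[K, T].$$
The decisive observation is that the congruence $U \equiv I \pmod{m_F}$ forces $|U[T, T]| = 1$ but $|U[T, J]| < 1$ whenever $J \neq T$: indeed, any column of the submatrix $U[T, J]$ labelled by some $j \in J \setminus T$ consists entirely of entries in $m_F$, so its determinant lies in $m_F$. The same holds for $V$. Coupled with the Hodge polygon bound $|A[J, K]| \leq \sigma_1 \cdots \sigma_i$, valid for every $i \times i$ minor by the definition of the singular values, every term with $(J, K) \neq (T, T)$ has norm strictly less than $\sigma_1 \cdots \sigma_i$, and the main term $U[T, T]\, A[T, T]\, V[T, T]$ differs from $A[T, T]$ itself by an error of the same order. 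Summing over $T$ and applying the strong triangle inequality yields $|E_i(UAV) - E_i(A)| < |\lambda_1 \cdots \lambda_i|$.

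It remains to verify the purely spectral identity $|E_i(A)| = |\lambda_1 \cdots \lambda_i|$. Lemma 4.2 rewrites $E_i(A) = \sum_{|T| = i} \prod_{j \in T} \lambda_j$, and the hypothesis $|\lambda_i| > |\lambda_{i+1}|$ combined with the ordering $|\lambda_1| \geq \cdots \geq |\lambda_n|$ makes $\lambda_1 \cdots \lambda_i$ the unique strictly largest summand in norm; the strong triangle inequality then gives the identity. Chaining this with the previous estimate forces $|E_i(UAV)| = |\lambda_1 \cdots \lambda_i|$, which closes the loop described in the first paragraph. The main technical obstacle is the Cauchy-Binet bookkeeping in the second paragraph — specifically, locating the $m_F$-column that produces the strict bound $|U[T, J]| < 1$ for $J \neq T$, and marshalling the Hodge polygon to control the $|A[J, K]|$'s uniformly — after which everything collapses under the strong triangle inequality.
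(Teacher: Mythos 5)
Your proposal is correct, but it takes a genuinely different route from the paper's proof. The paper leans on the decomposition it has just established: assuming in addition $\sigma_i>\sigma_{i+1}$, it conjugates $A$ into block-diagonal form $D$ via Theorem 3.8, writes $UAV=(UAU^{-1})(UV)$ with $UV\equiv I \pmod{m_F}$, handles the case $i=1$ by a single Cauchy--Binet expansion of the principal minors of $D(UV)$ together with Lemma 4.2, and then reduces the general case to $i=1$ by passing to the wedge power $\wedge^i A$ and Proposition 2.5. You never use the decomposition: you expand each principal $i\times i$ minor of $UAV$ by a double Cauchy--Binet, observe that the congruences $U,V\equiv I\pmod{m_F}$ make every minor $U[T,J]$ or $V[K,T]$ with mismatched index sets lie in $m_F$ while the principal ones are units congruent to $1$, invoke the bound $|A[J,K]|\le\sigma_1\cdots\sigma_i$ coming from Definition 3.4, and conclude $|E_i(UAV)|=|E_i(A)|=|\lambda_1\cdots\lambda_i|$, the last equality using the spectral gap $|\lambda_i|>|\lambda_{i+1}|$ and the ultrametric inequality; the squeeze $|\lambda_1\cdots\lambda_i|=|E_i(UAV)|\le|\mu_1\cdots\mu_i|\le\sigma_1(UAV)\cdots\sigma_i(UAV)=\sigma_1\cdots\sigma_i=|\lambda_1\cdots\lambda_i|$ then closes the argument. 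What your route buys: it avoids both the wedge-power reduction of Section 2 and the extra hypothesis $\sigma_i>\sigma_{i+1}$ that the paper's proof quietly adds beyond the stated assumptions, and every step is an elementary minor manipulation. What it costs: you import the nonarchimedean Weyl inequality $|\mu_1\cdots\mu_i|\le\sigma_1(UAV)\cdots\sigma_i(UAV)$ and the invariance of singular values under multiplication by $GL_n(\mathbf{o}_F)$; the paper records the latter in Definition 3.4 but only alludes to the former in the introduction, so you should either cite it (it is in Kedlaya's book) or note that it follows from the coefficient bounds $|E_k|\le\sigma_1\cdots\sigma_k$ via the Newton polygon of the characteristic polynomial. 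Your two key verifications, the strictness $|U[T,J]|<1$ for $J\ne T$ via the $m_F$-column and the identity $|E_i(A)|=|\lambda_1\cdots\lambda_i|$ via the unique maximal term, are both sound.
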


\begin{proof}
Denote the singular values of $A$ by $\sigma_{1},\dots,\sigma_{n}$ and suppose $\sigma_{i} >\sigma_{i+1}$. Applying nonarchimedean linear Hodge-Newton decomposition, there exists $U \in GL_{n}(\textbf{o}_{F})$ such that 
\[UAU^{-1}=\begin{pmatrix}
B & O \\
O & C \\
\end{pmatrix}\]
is block diagonal. 

Set $D=UAU^{-1}$ and $W=UV$. Write $UAV=(UAU^{-1})(UV)=D(UV)$. First let us consider the special case $i=1$. Since $U \equiv Id \,(mod\, m_{F}), V \equiv \, Id (mod \,m_{F})$, it follows that $W\equiv \,Id (mod \, m_{F}).$ By Binet-Cauchy formula, we have

\vspace{0.2cm}
$(DW) \begin{pmatrix}
i_{1} & i_{2} &\dots & i_{k}\\
i_{1} & i_{2} &\dots & i_{k}\\
\end{pmatrix}= \sum \limits_{1 \leq j_{1}< \dots < j_{k} \leq n} D \begin{pmatrix}
i_{1} & i_{2} &\dots & i_{k}\\
j_{1} & j_{2} &\dots & j_{k}\\
\end{pmatrix} W \begin{pmatrix}
j_{1} & j_{2} &\dots & j_{k}\\
i_{1} & i_{2} &\dots & i_{k}\\
\end{pmatrix}.$

\vspace{0.2cm}
Then

\vspace{0.2cm}
$|(DW)\begin{pmatrix}
i_{1} & i_{2} &\dots & i_{k}\\
i_{1} & i_{2} &\dots & i_{k}\\
\end{pmatrix}|= \max \limits_{1 \leq j_{1}< \dots < j_{k} \leq n}\{ |D \begin{pmatrix}
i_{1} & i_{2} &\dots & i_{k}\\
j_{1} & j_{2} &\dots & j_{k}\\
\end{pmatrix}| |W \begin{pmatrix}
j_{1} & j_{2} &\dots & j_{k}\\
i_{1} & i_{2} &\dots & i_{k}\\
\end{pmatrix}| \}.
$

\vspace{0.2cm}
Note that $W\equiv \,Id (mod \, m_{F}), |W \begin{pmatrix}
j_{1} & j_{2} &\dots & j_{k}\\
i_{1} & i_{2} &\dots & i_{k}\\
\end{pmatrix}| \leq 1$ by the strong triangle equality. It archives the maximum if and only $(j_{1},\dots,j_{k})=(i_{1},\dots,i_{k})$. For $D=UAU^{-1}=\begin{pmatrix}
B & O \\
O & C \\
\end{pmatrix}$, $B$ accounts for $\lambda_{1},\dots,\lambda_{i}$ and $D$ accounts for $\lambda_{i+1},\dots,\lambda_{n}$. Since the expansion of the determinant and $|\lambda_{1}| \geq \dots \geq |\lambda_{n}|$, we have

\vspace{0.2cm}
$
|(DW)\begin{pmatrix}
i_{1} & i_{2} &\dots & i_{k}\\
i_{1} & i_{2} &\dots & i_{k}\\
\end{pmatrix}|= |D\begin{pmatrix}
i_{1} & i_{2} &\dots & i_{k}\\
i_{1} & i_{2} &\dots & i_{k}\\
\end{pmatrix}|.$

\vspace{0.2cm}
By the expression of Lemma 4.2 and note that $|\lambda_{1}(UAV)+\dots+\lambda_{n}(UAV)|=|\lambda_{1}(UAV)|$, we obtain

\[ |\lambda_{1}(UAV)|=|\lambda_{1}(D)|=|\lambda_{1}| \]

For the general case, the idea is to reduce to the special case by considering the wedge product $\wedge^{i} A$. From Proposition 2.5, one can deduce that the largest eigenvalue of $\wedge^{i} A$ equals product of $i$ largest eigenvalues of $A$. It is clear that the norm of the largest eigenvalue of $\wedge^{i} A$ equals $|\lambda_{1} \dots \lambda_{i}|$. We proved the statement as desired.
\end{proof}

Now we can prove Theorem 4.1 as follows:

\vspace{0.1cm}
By Proposition 4.3,
\[|\lambda_{1}(UAV)\lambda_{2}(UAV) \dots \lambda_{i}(UAV)|=|\lambda_{1}(D)\lambda_{2}(D) \dots \lambda_{i}(D)|.\,\forall i\]

\vspace{0.1cm}
It implies that 
\[|\lambda_{k}(UAV)|=|\lambda_{k}(D)|\quad (k=1,\dots,n).\]

From the definition of Newton polygon, one obtains that the Newton polygons of $A$ and $UAV$ are coincide, which proved Theorem 4.1.

\vspace{0.1cm}
By a similar argument on singular values, we also show that $A$ and $UAV$ have the same Hodge polygon.

\begin{proposition}
Under the hypotheses and notations of Theorem 3.8, if $U,V \in GL_{n}(\textbf{o}_F)$ are congruent to the identity matrix modulo $m_F$, then the Hodge polygons of $A$ and $UAV$ are coincide.
\end{proposition}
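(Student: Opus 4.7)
The plan is to show that for each $k \in \{1, \dots, n\}$, the maximum norm of a $k \times k$ minor of $A$ is preserved under the transformation $A \mapsto UAV$. Since by Definition 3.4 the Hodge polygon is determined by the quantities $s_1 + \dots + s_k = -\log \max_{|I|=|J|=k} |A(I;J)|$, preserving these maxima for every $k$ forces each individual $s_k$ (equivalently each singular value $\sigma_k = e^{-s_k}$) to be preserved, whence the Hodge polygons of $A$ and $UAV$ coincide.

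The main technical step is the Cauchy-Binet formula
\[
(UAV)(I;J) = \sum_{I', J'} U(I; I') \, A(I'; J') \, V(J'; J),
\]
where $I, J, I', J'$ range over $k$-element index subsets of $\{1, \dots, n\}$. Since $U$ and $V$ are congruent to $I_n$ modulo $m_F$, all their entries lie in $\textbf{o}_F$, hence every minor $U(I;I')$ and $V(J';J)$ lies in $\textbf{o}_F$ and has norm at most $1$. The strong triangle inequality then gives
\[
|(UAV)(I;J)| \leq \max_{I', J'} |U(I; I')| \cdot |A(I'; J')| \cdot |V(J'; J)| \leq \max_{I', J'} |A(I'; J')|.
\]
Taking the supremum over $I, J$ yields $\mu_k(UAV) \leq \mu_k(A)$, where $\mu_k$ denotes the maximum norm of a $k \times k$ minor.

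For the reverse inequality, I would note that $U^{-1}$ and $V^{-1}$ again lie in $GL_n(\textbf{o}_F)$ and are congruent to $I_n$ modulo $m_F$: writing $U = I_n + M$ with $M$ having entries in $m_F$, the Neumann series $U^{-1} = \sum_{j \geq 0} (-M)^j$ converges in $\textbf{o}_F$ and reduces to $I_n$ modulo $m_F$. Applying the previous step to $A = U^{-1}(UAV)V^{-1}$ yields $\mu_k(A) \leq \mu_k(UAV)$, so equality holds for every $k$.

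No real obstacle arises; the proof parallels the Cauchy-Binet reasoning in Proposition 4.3 but is in fact cleaner, requiring only that every minor of $U$ and $V$ has norm at most one. In particular, the Hodge-Newton hypotheses of Theorem 3.8 are not needed in this argument, and the statement holds for any $n \times n$ matrix $A$ over $F$.
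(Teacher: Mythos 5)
Your proof is correct, and it is in essence the paper's own argument unrolled to the level of minors: the Cauchy--Binet identity you invoke for $(UAV)(I;J)$ is exactly the entrywise form of $\wedge^{k}(UAV)=(\wedge^{k}U)(\wedge^{k}A)(\wedge^{k}V)$, and the quantity $\mu_k$ you track is $|\wedge^{k}A|=\sigma_1\cdots\sigma_k$, which is what the paper manipulates via operator norms of wedge powers. The main difference is in bookkeeping, and yours is actually the more careful version: the paper writes a chain of equalities including $|UAV|=|U||A||V|$, which as stated is only submultiplicativity and really rests on the remark in Definition 3.4 that singular values are invariant under multiplication by elements of $GL_n(\textbf{o}_F)$; you instead prove the two inequalities $\mu_k(UAV)\leq\mu_k(A)$ and $\mu_k(A)\leq\mu_k(UAV)$ explicitly by passing to $U^{-1},V^{-1}$. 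Two small remarks: the Neumann series is unnecessary overhead --- $U^{-1}=\det(U)^{-1}\mathrm{adj}(U)$ already has entries in $\textbf{o}_F$ since $\det(U)$ is a unit, and reduction mod $m_F$ is a ring homomorphism, so $U^{-1}\equiv I_n$ automatically --- and, as you observe, neither the congruence mod $m_F$ nor the Hodge--Newton hypotheses of Theorem 3.8 is used anywhere; the statement holds for arbitrary $A$ and arbitrary $U,V\in GL_n(\textbf{o}_F)$, which is consistent with the paper's own remark in Definition 3.4.
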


\begin{proof}
The condition $U \equiv Id \,(mod\, m_{F})$ leads to $|U|=max\{|U_{ij}|\}=1$. We 
have $|V|=1$ by the same argument for $V$. Hence 
\[\sigma_{1}(UAV)=|UAV|=|U||A||V|=|A|=\sigma_{1}(A).\]

Consider the wedge product of $A$, then for any $k=1,\dots,n$
\[\sigma_{1}(A) \dots \sigma_{k}(A)=|\wedge^{k} A|=|(\wedge^{k} U)(\wedge^{k} A)(\wedge^{k} V)|=|\wedge^{k}(UAV)|=\sigma_{1}(UAV) \dots \sigma_{k}(UAV).\]

So the singular values of $A$ and $UAV$ are coincide. It immediately follows from the definition of Hodge polygon that $A$ and $UAV$ have the same Hodge polygon.
\end{proof}

Finally, we give an archimedean analogue of Lemma 5 of \cite{KF}.
\begin{proposition}
Suppose $U,V \in GL_{n}(\mathbb{C})$ and $D$ is diagonal. If every principal minor of $VU$ equals 1, then the Newton polygons of $D$ and $UDV$ are coincide. 
\end{proposition}

\begin{proof}
Here we give a direct proof. Note that $U^{-1}(UDV)U=D(VU)$, it is obvious that the Newton polygons of $UDV$ and $D(VU)$ are coincide.

Since $D$ is diagonal, we have 

\vspace{0.2cm}
~$|D(VU)\begin{pmatrix}
i_{1} & i_{2} &\dots & i_{r}\\
i_{1} & i_{2} &\dots & i_{r}\\
\end{pmatrix}|= |D\begin{pmatrix}
i_{1} & i_{2} &\dots & i_{r}\\
i_{1} & i_{2} &\dots & i_{r}\\
\end{pmatrix}||(VU)\begin{pmatrix}
i_{1} & i_{2} &\dots & i_{r}\\
i_{1} & i_{2} &\dots & i_{r}\\
\end{pmatrix}|$~

\vspace{0.1cm}
\qquad \qquad \qquad \qquad \qquad \qquad \qquad~$=|D\begin{pmatrix}
i_{1} & i_{2} &\dots & i_{r}\\
i_{1} & i_{2} &\dots & i_{r}\\
\end{pmatrix}|.$~

\vspace{0.1cm}
Using the same method in Proposition 4.3, one can obtain that the Newton polygons of $D$ and $D(UV)$ are coincide, which proved the statement.
\end{proof}

\vspace{0.2cm}

\end{document}